\theoremstyle{plain}
\newtheorem{theorem}{Theorem}[section]
\newtheorem{lemma}[theorem]{Lemma}
\newtheorem{proposition}[theorem]{Proposition}
\newtheorem{problem}[theorem]{Problem}
\newtheorem{fact}[theorem]{Fact}
\newcounter{maintheorem}
\newtheorem{mainth}[maintheorem]{Theorem}
\theoremstyle{definition}
\newtheorem{definition}[theorem]{Definition}
\newtheorem{remark}[theorem]{Remark}
\newtheorem{example}[theorem]{Example}
\renewcommand{\leq}{\leqslant}
\renewcommand{\geq}{\geqslant}
\newcommand{\R}{\mathbb{R}}
\newcommand{\N}{\mathbb{N}}
\newcommand{\inte}{\mathrm{int}\:}
\renewcommand{\epsilon}{\varepsilon}
\renewcommand{\phi}{\varphi}
\author[C.A.~De~Bernardi]{Carlo Alberto De Bernardi}
\address{Dipartimento di Matematica per le Scienze economiche, finanziarie ed attuariali, Universit\`a Cattolica del Sacro Cuore, 20123 Milano,Italy}
\email{carloalberto.debernardi@unicatt.it}
\email{carloalberto.debernardi@gmail.com}
\author[A.~Preti]{Alessandro Preti}
\address{Politecnico di Milano, Dipartimento di Matematica, Piazza Leonardo da Vinci 32, 20133 Milano, Italy.}
\email{alessandro3.preti@mail.polimi.it}
\email{alessandro.preti@outlook.it}
\author[J.~Somaglia]{Jacopo Somaglia}
\address{Politecnico di Milano, Dipartimento di Matematica, Piazza Leonardo da Vinci 32, 20133 Milano, Italy.}
\email{jacopo.somaglia@polimi.it}
\subjclass[2020]{Primary 46B03, 	46B20 ; Secondary 52A07}
\keywords{Renorming, rotund norm, MLUR norm, G\^ateaux norm, Fr\'echet norm, URED norm}
\thanks{
The research of the first author was supported by the INdAM -
GNAMPA Project,  CUP E53C23001670001, and by MICINN project PID2020-112491GB-I00 (Spain).
The research of the third author was supported by the INdAM -
GNAMPA Project,  CUP E53C23001670001.
}
\title[Smooth rotund norms which are  not MLUR]{A note on smooth rotund norms which are  not midpoint locally uniformly rotund}
\begin{document}
	
	\begin{abstract} 
		We prove that every separable infinite-dimensional Banach space admits a G\^ateaux smooth and rotund norm which is not midpoint locally uniformly rotund. Moreover, by using a similar technique, we provide in every infinite-dimensional Banach space with separable dual a Fr\'echet  smooth and weakly uniformly rotund norm which is not midpoint locally uniformly rotund. These two results provide a positive answer to some open problems by A. J. Guirao, V. Montesinos, and V. Zizler.
	\end{abstract}

	\maketitle
\section{Introduction}
In \cite{DEBESOM-ALUR} the first and the third named authors proved the existence, in every infinite-dimensional separable Banach space, of a rotund G\^ateaux smooth norm which satisfies the Kadec property but it is not locally uniformly rotund (LUR). In particular, this result solves  in the affirmative a problem posed in the recent monograph \cite{GMZ} by A. J. Guirao, V. Montesinos, and V. Zizler (\cite[Problem 52.1.1.5]{GMZ}).
The same monograph contains many other problems in the same direction, basically devoted to the study of  relations between the different rotundity or smoothness properties of a norm, and few of them have recently been solved. We mention some very recent results in this direction (we refer to Definition \ref{d: basic def} and \cite{GMZ} for unexplained definitions of different notions of rotundity).
\begin{enumerate}
    \item In \cite{HQ} P. H\'ajek and A. Quilis have constructed in every infinite-dimensional separable Banach space a norm which is LUR but not URED (uniformly rotund in every direction), solving positively \cite[Problem 52.3.4]{GMZ}.
    \item 
In \cite{Q}  A. Quilis 
 has constructed in every separable Banach space admitting a $C^k$-smooth norm an equivalent norm which is $C^k$-smooth but fails to be uniformly Gâteaux in any direction. Moreover, in the same paper, he proved that,  for any infinite $\Gamma$, $c_0(\Gamma)$ admits a $C^\infty$-smooth norm whose ball is dentable but whose sphere lacks any extreme points. 
In particular, these results solve affirmatively \cite[Problem 52.1.2.1]{GMZ} and \cite[Problem 52.1.3.3]{GMZ}.
\end{enumerate}

 The aim of our paper is to study the following  two problems.

\begin{problem}
\label{p: RnotMLUR}
    Does every infinite-dimensional separable Banach space admit an equivalent rotund and  G\^ateaux smooth norm which is not MLUR?
\end{problem}

\begin{problem}
\label{p: WURnotMLUR}
    Does every infinite-dimensional Banach space with separable dual admit an equivalent WUR and Fr\'echet smooth norm which is not MLUR? 
\end{problem}

Let us notice that a solution in the affirmative to Problems~\ref{p: RnotMLUR}~and~\ref{p: WURnotMLUR} also provides a solution (in the affirmative) to  
{\cite[Problem~52.3.3]{GMZ}} and 
{\cite[Problem~52.3.6]{GMZ}}, in which smoothness assumptions are not required. 

Let us describe the structure of the paper and its main results. In Section~\ref{sec: notation}, after some notations and preliminaries,  we recall a construction by S.~Draga contained in \cite{D15}, which was introduced for proving the existence of an equivalent WLUR norm that is not LUR, in any infinite-dimensional Banach space with  separable dual. We observe that such a norm also provides a solution in the affirmative to \cite[Problem~52.3.3]{GMZ}. Moreover, we observe that a similar construction also solves affirmatively   
\cite[Problem~52.3.6]{GMZ}. However, it is 
 standard to show that the  norm provided in \cite{D15} is not G\^ateaux smooth and hence such a renorming technique cannot be directly used to solve   Problems~\ref{p: RnotMLUR}~and~\ref{p: WURnotMLUR} (see Section~\ref{sec: draganorm} for the details). The main result of the paper consists in  an alternative construction that solves 
these two problems in the affirmative. More precisely, in Section~\ref{sec: main result}, we prove that (see Theorems~\ref{th: mainG}~and~\ref{th: mainF}): 

\begin{mainth}\label{th: thm A}
Let $(X,\|\cdot\|)$ be a separable Banach space [a Banach space with separable dual, respectively], then there exists a
 rotund G\^ateaux  smooth [a weak uniformly rotund and Fr\'echet smooth, respectively]  equivalent norm  which is not midpoint locally uniformly rotund. Moreover, such a norm can be chosen arbitrarily close to $\|\cdot\|$.
\end{mainth}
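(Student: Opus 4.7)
I outline the argument for the separable case (rotund, Gâteaux smooth, not MLUR); the Fréchet/WUR version for the separable dual case runs along the same lines with upgraded ingredients.

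The overall strategy is constructive: build an equivalent norm whose unit ball is explicitly designed to be rotund and Gâteaux smooth but to exhibit, at one distinguished sphere point $x_0$, an asymptotic ``flat'' behavior preventing MLUR. After fixing an M-basis $(e_n,e_n^*)$ of $X$ and picking $x_0$ adapted to this basis (for instance a suitable multiple of $e_1$), I would specify the new norm $\norma{\cdot}$ by combining three ingredients: (a) the unit ball of a base rotund Gâteaux smooth norm $\|\cdot\|_G$ chosen close to $\|\cdot\|$ via the classical renorming theorems of Day--Kadec--Troyanski; (b) a Hilbertian tail of the form $\sum_n a_n (e_n^*(x))^2$ with rapidly decaying weights $(a_n)$, whose role is to enforce rotundity in all coordinates simultaneously; and (c) a carefully smoothed ``$\sup$''-type seminorm, engineered to reproduce, along the directions $e_n$ for large $n$, the flat behavior of the $c_0$-ball near $e_1$, but without ever creating corners. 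The candidate MLUR-failure data are then $x_n=x_0+c\,e_n,\ y_n=x_0-c\,e_n$ for a fixed $c>0$, as $n\to\infty$.

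Rotundity of $\norma{\cdot}$ should follow from the Hilbertian ingredient (strict convexity of the $\ell_2$-sum in the parallelogram-type estimate), which forces any two vectors with equal norm and equal-norm midpoint to share all coordinates. Gâteaux smoothness reduces, via the smooth combination (square root of a sum of squares) of the ingredients, to the smoothness of each ingredient separately: the $\ell_2$-tail is automatically smooth, the base norm is smooth by choice, and the smoothed-$\sup$ part must be engineered so that its attaining index is always unique (e.g.\ by adding small coordinate-specific perturbations which tie-break). The failure of MLUR is then an explicit computation along $(x_n),(y_n)$: one verifies $\norma{x_n},\norma{y_n}\to\norma{x_0}$ while $\frac{1}{2}(x_n+y_n)=x_0$ (so midpoints converge trivially) and $\norma{x_n-y_n}=2c\,\norma{e_n}$ stays bounded away from zero.

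The main obstacle I expect is precisely the simultaneous engineering of ingredient (c) together with the base norm: it has to be Gâteaux smooth everywhere, yet strong enough to keep $\norma{x_0\pm c\,e_n}$ asymptotically equal to $\norma{x_0}$, yet not so strong that it destroys rotundity or reintroduces ties in the smoothed sup. This is exactly the point where the norm of \cite{D15} is non-smooth, and where the novelty of the present theorem must lie. For the Fréchet/WUR part of Theorem~\ref{th: thm A} one upgrades the base norm $\|\cdot\|_G$ to a Fréchet smooth WUR norm (provided by Troyanski's theorem since $X^*$ is separable), upgrades the Hilbertian tail so the combined norm is WUR, and strengthens each smoothness assertion to Fréchet; the same data $(x_n,y_n)$ again witness the failure of MLUR. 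Closeness to $\|\cdot\|$ is controlled by pre-scaling: the additional ingredients (b) and (c) are inserted with a small coefficient, and $\|\cdot\|_G$ itself is chosen within a prescribed $\varepsilon$ of $\|\cdot\|$ from the start.
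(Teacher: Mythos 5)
Your high-level plan matches the paper's architecture: a Gâteaux (or Fréchet) smooth base norm close to $\|\cdot\|$, a Hilbertian tail $\sum_n a_n |f_n(x)|^2$ to enforce rotundity/WUR, a ``flat'' cap near a distinguished point $e_1$ to destroy MLUR, and the witness sequences $x_n=e_1+ce_n$, $y_n=e_1-ce_n$. That part is right. But the proposal has a genuine gap precisely where you yourself flag the main obstacle: you never construct ingredient (c), the flat-yet-smooth part, and the vague idea of a ``carefully smoothed sup-type seminorm'' with coordinate tie-breaking perturbations is not a construction. Smoothing a sup tends to destroy the flatness you need, and ad hoc perturbations typically also destroy it or reintroduce nonsmoothness elsewhere; you give no reason either effect is avoided. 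Without this, there is no proof.

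The paper's Lemma~\ref{lemma: flatnormingG} fills this gap by an entirely different device, and it is worth seeing why it works. Starting from a norm with rotund (resp.~LUR) dual, one first removes two opposite slices $S(B_{\|\cdot\|}, x_1^*,\epsilon)$ and $-S$ from the unit ball, producing a ball $B_{\|\cdot\|_1}$ with a flat cap at height $x_1^*=1-\epsilon$. One then forms the Minkowski sum $B_{|\!|\!|\cdot|\!|\!|}=\overline{B_{\|\cdot\|_1}+\epsilon B_{\|\cdot\|}}$. On the dual side this is the sum of norms $\|\cdot\|_1^*+\epsilon\|\cdot\|^*$, which is rotund (resp.~LUR by Proposition~\ref{p: LUR sum}) since $\|\cdot\|^*$ is; hence the primal $|\!|\!|\cdot|\!|\!|$ is Gâteaux (resp.~Fréchet) smooth with no local surgery or tie-breaking required. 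Crucially, the flat cap survives the Minkowski sum: $\sup x_1^*(B_{|\!|\!|\cdot|\!|\!|})=(1-\epsilon)+\epsilon=1$, and for small $y\in\ker x_1^*$ the point $e_1+y=(1-\epsilon)x_1+y+\epsilon x_1$ still lies on the boundary at that level. This slice-then-Minkowski-sum argument is the missing key lemma in your sketch; once it is in place, the remainder of your outline (square-sum with the Hilbertian tail, explicit MLUR-failure computation, closeness by pre-scaling, and the Fréchet/WUR upgrade via a shrinking basis and LUR dual) is essentially the paper's proof of Theorems~\ref{th: mainG} and~\ref{th: mainF}.
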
 

\noindent Moreover, in Remark~\ref{r: URED} we observe that our construction, also  provide, in every separable space, an equivalent norm that is  G\^ateaux  smooth and URED but not LUR. In particular, this solves in the affirmative  \cite[Problem 52.3.5]{GMZ} (in which the smoothness assumption is not required). 

Finally, we conclude the section by showing that every separable Banach space admits an equivalent rotund norm in which the set of MLUR points is porous (and hence has empty interior) in the unit sphere. On the other hand seems to be open the following problem: can every infinite-dimensional separable space be renormed to be rotund and  G\^ateaux  smooth  but with no LUR point in the unit sphere?

In the Appendix we provide an alternative proof of Lemma \ref{lemma: flatnormingF}.

\section{Preliminaries}\label{sec: notation}

We follow the notations and terminologies introduced in \cite{DEBESOM-ALUR}. Throughout this  paper, Banach spaces are real and infinite-dimensional. Let $X$ be a normed space, by $X^*$ we denote the dual space of $X$. By $B_X$, $U_X$, and $S_X$ we denote the closed unit ball, the open unit ball, and the unit sphere of $X$, respectively.
	Moreover, in situations when more than one
	norm on $X$ is considered, we denote by $B_{(X,\|\cdot\|)}$, $U_{(X,\|\cdot\|)}$ and $S_{(X,\|\cdot\|)}$ the closed unit ball, the open unit ball, and the closed unit sphere  with respect to the norm ${\|\cdot\|}$, respectively. By $\|\cdot\|^*$ we denote the dual norm of $\|\cdot\|$.
 A \emph{biorthogonal system} in a separable Banach space $X$ is a system $(e_n, f_n)_{n\in\N}\subset X\times X^*$, such that $f_n(e_m)=\delta_{n,m}$ ($n,m\in \N$). A biorthogonal system is \emph{fundamental} if ${\rm span}\{e_n\}_{n\in\N}$ is dense in $X$; it is \emph{total} when ${\rm span}\{f_n\} _{n\in\N}$ is $w^*$-dense in $X^*$. A \emph{Markushevich basis} (M-basis) is a fundamental and total biorthogonal system. An M-basis $(e_n,f_n)_{n\in \N}$ is called \textit{bounded} if $\sup_{n\in\N}\|e_{n}\|\|f_n\|^*$ is finite. By \cite[Theorem 1.27]{HMVZ} every separable Banach space admits a bounded M-basis. We refer to \cite{HMVZ,HRST,RS23} and references therein for some more information on M-bases.

Given a subset $K$ of $X$, a {\em slice} of $K$ is a set of the form $$S(K,x^*,\alpha) := \{x\in K;\,  x^*(x)> \sup x^*(K)-\alpha\},$$ where $\alpha> 0$ and $x^*\in X^*\setminus\{0\}$ is bounded above on $K$.
For convenience of the reader, we recall some standard notions which we list in the following definition.

\begin{definition}\label{d: basic def}
Let $x\in S_X$ and $x^*\in S_{X^*}$. We say that:
\begin{itemize}
\item  $x$ is  an {\em extreme point} of $B_X$ if $x=y=z$, whenever $y,z\in B_X$ and $2x=y+z$; 
\item $X$ is \textit{rotund} if each point of $S_X$ is an extreme point of $B_X$;
\item $x$ is a {\em locally uniformly rotund (LUR) point} of $B_X$	if  $x_n\to x$, whenever $\{x_n\}_{n\in\N}\subset X$ is such that $$\lim_n 2\|x\|^2 + 2\|x_n\|^2 - \|x+x_n\|^2=0;$$
\item $x$ is a \textit{midpoint locally uniformly rotund (MLUR) point} if whenever $\{x_n\}_{n\in\N},$ $ \{y_n\}_{n\in\N} \subset X$ are such that $\|x_n\|\to \|x\|$, $\|y_n\|\to \|x\|$ and $\|\frac{x_n+y_n}{2}-x\|\to 0$, then $\|x_n-y_n\|\to 0$;
\item $x$ is a {\em weakly locally uniformly rotund (WLUR) point} of $B_X$	if $x_n\to x$ weakly, whenever $\{x_n\}_{n\in\N}\subset X$ is such that $$\lim_n 2\|x\|^2 + 2\|x_n\|^2 - \|x+x_n\|^2=0;$$
\item $x$ is a {\em weakly midpoint locally uniformly rotund (WMLUR) point} of $B_X$	if $x_n\to x$ weakly, whenever $\{x_n\}_{n\in\N}, \{y_n\}_{n\in\N} \subset X$ are such that $\|x_n\|\to \|x\|$, $\|y_n\|\to \|x\|$ and $\|\frac{x_n+y_n}{2}-x\|\to 0$;
\item 
$x$ is a  {\em G\^ateaux smooth point} (respectively, {\em Fr\'echet smooth point})  of $B_X$ if, 
 the following limit
holds $$\lim_{h\to0}\frac{\|x+hy\|+\|x-hy\|-2}{h}=0,$$
whenever $y\in S_X$ (respectively, uniformly for  $y\in S_X$).
\end{itemize}
\end{definition}

We say that a norm $\|\cdot\|$ on $X$ is LUR if each point $x\in S_X$ is a locally uniformly rotund point. Similarly one can define the notions of MLUR, WLUR, WMLUR, G\^ateaux smooth, Fr\'echet smooth norm. Finally, a norm $\|\cdot\|$ is said \textit{weakly uniformly rotund (WUR)} if $x_n-y_n\to 0$ in the weak topology of $X$ whenever $x_n,y_n\in S_X$ are such that $\|x_n + y_n\|\to 2$. 

\subsection{Some remarks on Draga's norm}\label{sec: draganorm} Let $(X,\|\cdot\|)$ be a separable Banach space and $(e_n,f_n)_{n\in\N}$ a bounded M-basis. Moreover, we assume that $\|e_n\|=1$, for every $n\in\N$. Define, for $x\in X$ 
\begin{equation*}
    \|x\|_0=\max\{\frac{1}{2}\|x\|,\sup_n|f_n(x)|\}
\end{equation*}
and
\begin{equation}\label{e: normDraga}
   |\!|\!|x|\!|\!|^2=\|x\|^2_0 + \sum_{n=1}^\infty 4^{-n}|f_n(x)|^2. 
\end{equation}
The equivalent norm defined in \eqref{e: normDraga} was introduced by S. Draga in \cite{D15} for proving that \textit{every infinite-dimensional Banach space with separable dual admits an equivalent WLUR norm which is not LUR}. It turns out that the same norm can be used for solving affirmatively the two open problems \cite[Problem 52.3.3]{GMZ} and \cite[Problem 52.3.6]{GMZ}. Indeed, by a standard argument, the norm $|\!|\!|\cdot|\!|\!|$ is rotund (see e.g. \cite[Proposition 161]{GMZ}). Let us show that the norm $|\!|\!|\cdot|\!|\!|$ is not MLUR. Let
\begin{equation*}
    x=e_1, \quad x_n=e_1+e_n, \quad y_n=e_1-e_n.
\end{equation*}
We observe that
\begin{equation*}
    |\!|\!|x|\!|\!|^2=\frac{5}{4},\quad |\!|\!|x_n|\!|\!|^2= \frac{5}{4} + \frac{1}{4^n}, \quad  |\!|\!|y_n|\!|\!|^2= \frac{5}{4} + \frac{1}{4^n}. 
\end{equation*}
Therefore,
\begin{equation*}
    |\!|\!|x_n|\!|\!|\to |\!|\!|x|\!|\!| \mbox{ and }  |\!|\!|y_n|\!|\!|\to |\!|\!|x|\!|\!|. 
\end{equation*}
Observing that
\begin{equation*}
   \textstyle{|\!|\!|}\frac{x_n + y_n}{2} - x{|\!|\!|}=0 
\end{equation*}
and that
\begin{equation*}
    |\!|\!|x_n-y_n|\!|\!|=2|\!|\!|e_n|\!|\!|=2 + \frac{2}{4^n}\geq 2, 
\end{equation*}
for every $n\in\N$, we get that $|\!|\!|\cdot|\!|\!|$ is not MLUR and \cite[Problem 52.3.3]{GMZ} is solved.

Finally, if additionally we assume that $X^*$ is separable, we can consider the M-basis $(e_n,f_n)_{n\in\N}$ being shrinking (i.e.
 $\overline{\operatorname{span}}\{f_n\colon n\in \N\}=X^*$). Thus, we get easily that the norm $|\!|\!|\cdot|\!|\!|$ is WUR and not MLUR (this solves \cite[Problem 52.3.6]{GMZ}).
On the other hand, it is standard to show that the norm $|\!|\!|\cdot|\!|\!|$ is not G\^ateaux smooth at $x=e_1+e_2$. Indeed, for instance it is enough to observe that
\begin{equation*}
    \lim_{t\to 0^-}\frac{|\!|\!|e_1+e_2+te_1|\!|\!|-|\!|\!|e_1+e_2|\!|\!|}{t} =\frac{\sqrt{21}}{21},
\end{equation*}
while
\begin{equation*}
    \liminf_{t\to 0^+} \frac{|\!|\!|e_1+e_2+te_1|\!|\!|-|\!|\!|e_1+e_2|\!|\!|}{t} \geq\frac{5\sqrt{21}}{21}.
\end{equation*}
In the third section, we provide, in every separable Banach space (Banach space with separable dual, respectively), an example of a rotund (WUR, respectively) not MLUR norm, which is additionally G\^ateaux smooth (Fr\'echet smooth, respectively), see Theorem \ref{th: mainG} and Theorem \ref{th: mainF} below.

\begin{remark}
    In \cite[Theorem 2.1]{YOST} D. Yost provided, in every separable Banach space, a rotund renorming which is not WLUR. It turns out, by the same argument as above, that such a norm is not MLUR. Even this norm is not G\^ateaux smooth.
\end{remark}

\subsection{A consequence of relatively compactness of almost overcomplete sequence} We conclude this preliminary section by showing that if a norm on a Banach space $X$ is not LUR, then there exists a closed infinite-codimensional subspace in which the restriction of the initial norm is not LUR. Although this result is not surprising, its proof is not trivial and requires some additional tools and moreover might be of independent interest. For these reasons we decided to include it in the manuscript. Notice that the argument used in Proposition \ref{p: applicationFZ} can be applied to other rotundity properties.  We recall that a sequence $\{x_n\}_{n\in\N}$ in a separable Banach space $X$ is said \textit{almost overcomplete} if $\overline{\operatorname{span}}\{x_{n_k}\}_{k\in\N}$ has finite codimension in $X$, for every subsequence $\{x_{n_k}\}_{k\in\N}$. By \cite[Theorem 2.1]{FonfZanco} if $\{x_n\}_{n\in\N}\subset X$ is almost overcomplete and bounded, then it is relatively compact (see also \cite[Corollary 5.5]{RS21} for an alternative proof). Finally, we refer to \cite{FonfZanco,FSTZ,Kos21,RS21} and references therein for further information on overcomplete sets. 

\begin{proposition}\label{p: applicationFZ}
    Let $(X,\|\cdot\|)$ be a separable Banach space.  Suppose that $\|\cdot\|$ is not LUR. Then there exists a closed infinite-codimensional subspace $Y\subset X$ such that $\|\cdot\|$ restricted to $Y$ is not LUR.
\end{proposition}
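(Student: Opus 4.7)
The plan is to unpack the failure of LUR into a concrete witnessing sequence and then split on whether that sequence is almost overcomplete, using the Fonf--Zanco theorem in the awkward case. First I will extract from the hypothesis a point $x \in S_X$, a number $\varepsilon > 0$, and a sequence $\{x_n\}_n \subset X$ satisfying $\|x_n\| \to 1$, $\|x + x_n\| \to 2$, and $\|x - x_n\| \geq \varepsilon$ for every $n$. After normalizing, I may assume $\{x_n\}_n \subset S_X$.

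In the first case, where $\{x_n\}$ is not almost overcomplete, the definition supplies a subsequence $\{x_{n_k}\}_k$ whose closed linear span $Y_0 := \overline{\operatorname{span}}\{x_{n_k} : k\in\N\}$ has infinite codimension in $X$. I set $Y := Y_0 + \operatorname{span}\{x\}$; being the sum of a closed subspace and a finite-dimensional one, $Y$ is closed, has infinite codimension (at most one more than $Y_0$), and contains $x$ together with all $x_{n_k}$. Then $\{x_{n_k}\}_k$ still witnesses the failure of LUR at $x$ inside $(Y,\|\cdot\|)$.

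In the second case, where $\{x_n\}$ is almost overcomplete, boundedness together with \cite[Theorem~2.1]{FonfZanco} yields a norm-convergent subsequence $x_{n_k} \to z$. Passing to the limit gives $z \in S_X$, $\|x+z\| = 2$, and $\|x-z\| \geq \varepsilon$, so $x \neq z$. A Hahn--Banach functional $f$ of norm one with $f(x+z) = 2$ satisfies $f(x) = f(z) = 1$, which forces the whole segment $[x,z]$ to lie in $S_X$. Hence $w := (x+z)/2 \in S_X$ and a direct computation (apply $f$ to $(3x+z)/2$) gives $\|w+x\| = 2$. I then pick any closed infinite-codimensional subspace $Z \subset X$ and set $Y := Z + \operatorname{span}\{x,z\}$, which remains closed and of infinite codimension and contains $w$ and $x$. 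The constant sequence $y_n := x$ in $Y$ witnesses the failure of LUR at $w \in S_Y$, since $\|y_n\| = \|w\| = 1$, $\|y_n + w\| = 2$, yet $y_n = x \neq w$.

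The hard part is the almost overcomplete case: the Fonf--Zanco theorem then prevents any subsequence of $\{x_n\}$ from spanning an infinite-codimensional subspace, so one cannot simply plant the original witnesses into a small $Y$. The workaround is to first contract the sequence to a single limit point $z \neq x$, observe that this upgrades the LUR-failure into a nontrivial flat face $[x,z] \subset S_X$, and then exploit that LUR automatically fails at the midpoint of such a face inside every closed subspace containing its endpoints, of which there are infinite-codimensional ones in abundance.
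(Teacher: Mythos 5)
Your proof is correct and follows essentially the same route as the paper: the same dichotomy on whether $\{x_n\}$ is almost overcomplete, the Fonf--Zanco theorem to extract a norm-convergent subsequence in the awkward case, and the observation that the resulting limit $z\neq x$ produces a flat segment $[x,z]\subset S_X$ on which LUR (indeed rotundity) fails in any subspace containing it. The only superfluous step is in the second case, where you form $Y:=Z+\operatorname{span}\{x,z\}$: since $X$ is infinite-dimensional, the two-dimensional subspace $\operatorname{span}\{x,z\}$ is already closed and of infinite codimension, which is exactly what the paper uses.
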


\begin{proof}
Since $(X,\|\cdot\|)$ is not LUR, there exists a point $x\in S_X$ and a sequence $\{x_n\}_{n\in\N}\subset S_X$ such that $\|x_n+x\|\to 2$ for $n\to \infty$, and $\{x_n\}_{n\in\N}$ does not converge to $x$. 

We may suppose, without loss of generality, that $\{x_n\}_{n\in\N}$ is almost overcomplete. Indeed, if not, we would get easily the assertion.

Since $\{x_n\}_{n\in\N}$ is almost overcomplete and bounded, by \cite[Theorem 2.1]{FonfZanco}, $\{x_n\}_{n\in\N}$ is relatively compact. Hence, we may assume that $x_n\to y$ for some $y\in S_X$.

Since $\{x_n\}_{n\in\N}$ does not converge to  $x$, the element $y$ is different from $x$. Furthermore we have
\begin{equation*}
   \bigg \|\frac{x_n+x}{2}\bigg\|\to \bigg\|\frac{x+y}{2}\bigg\| \mbox{ and } \bigg\|\frac{x_n+x}{2}\bigg\|\to 1.
\end{equation*}
Thus, the norm $\|\cdot\|$ restricted to $Y=\mathrm{span}\{x,y\}$ is not strictly convex.
\end{proof}

\section{Main results}\label{sec: main result}

The aim of the present section is to prove Theorem \ref{th: thm A}. We start by proving a technical lemma which ensures the existence, in every separable Banach space, of a smooth renorming which is not MLUR.

\begin{lemma}\label{lemma: flatnormingG}
    Let $(X,\|\cdot\|)$ be a separable  Banach space with rotund dual norm. Then, for each $\epsilon\in (0,1)$, there exist an equivalent norm $|\!|\!|\cdot|\!|\!|$, and a bounded M-basis   $(e_n, f_n)_{n\in\N}$ on $X$ such that:
    \begin{enumerate}
        \item $|\!|\!|\cdot|\!|\!|$  is G\^ateaux smooth;
        \item $|\!|\!|e_n|\!|\!|=1$, for every $n\in\N$;
\item for each $y\in \ker (f_1)$ with $\|y\|\leq \epsilon$, we have that $|\!|\!|e_1+y|\!|\!|=1$;
\item $|\!|\!|x|\!|\!|\leq \|x\|\leq{(1+\epsilon)}|\!|\!|x|\!|\!|$, for every $x\in X$.
    \end{enumerate}  
\end{lemma}

\begin{proof}
Let   $(x_n, x^*_n)_{n\in\N}$ be a bounded M-basis    on $X$ such that
$\|x^*_1\|^*=\|x_1\|=1$.
Define $S=S(B_{\|\cdot\|}, x^*_1,\epsilon)$ and let $\|\cdot\|_1$ be the equivalent norm on $X$ such that 
$$B_{\|\cdot\|_1}=B_{\|\cdot\|}\setminus[S\cup(-S)];$$
observe that:
\begin{enumerate}[(a)]
    \item $(1-\epsilon)B_{\|\cdot\|}\subset B_{\|\cdot\|_1}\subset B_{\|\cdot\|}$;
    \item since $(1-\epsilon)x_1\in U_{\|\cdot\|}$, for each $y\in \ker (x^*_1)$ with $\|y\|\leq \epsilon$, we have  $\|(1-\epsilon)x_1+y\|_1=1$. 
\end{enumerate}

Now, let  $|\!|\!|\cdot|\!|\!|$ be the equivalent norm defined by
\begin{equation}\label{eq: deftrestanghette}
B_{|\!|\!|\cdot|\!|\!|}=\overline{B_{\|\cdot\|_1}+\epsilon B_{\|\cdot\|}},
\end{equation}
and observe that, by \cite[Table~3.1, p.54]{GMZ}, we have 
$$|\!|\!|\cdot|\!|\!|^*=
\|\cdot\|^*_1+\epsilon \|\cdot\|^*.$$
Since by our assumption $\|\cdot\|^*$ is rotund, by \cite[Proposition~152]{GMZ},
$|\!|\!|\cdot|\!|\!|^*$ is rotund too. In particular (i) holds (see \cite[Theorem 143(i)]{GMZ}). Now, observe that, by definition of $|\!|\!|\cdot|\!|\!|$ and $\|\cdot\|_1$, we have 
\begin{equation}\label{eq: sup=1}
    \sup x_1^*(B_{|\!|\!|\cdot|\!|\!|})=\sup x_1^*(B_{\|\cdot\|_1})+\epsilon \sup x_1^*(B_{\|\cdot\|})=1-\epsilon+\epsilon=1.
\end{equation}
Since $x^*_1(x_1)=1$, combining \eqref{eq: deftrestanghette} and \eqref{eq: sup=1} we get $|\!|\!|x_1|\!|\!|=1$.  For each $n\in\N$, define $e_n=\frac{x_n}{|\!|\!|x_n|\!|\!|}$ and $f_n=|\!|\!|x_n|\!|\!|x_n^*$, clearly $(e_n,f_n)_{n\in\N}$ is a bounded M-basis and (ii) holds. 

Let us prove (iii). Take $y\in\ker(f_1)=\ker(x_1^*)$ with $\|y\|\leq\epsilon$ and observe that, by (b), we have
$$e_1+y=(1-\epsilon)x_1+y+\epsilon x_1\in B_{\|\cdot\|_1}+\epsilon B_{\|\cdot\|}\subset B_{|\!|\!|\cdot|\!|\!|};$$
moreover, $x_1^*(e_1+y)=1$. Since, by \eqref{eq: sup=1}, 
 $\sup x_1^*(B_{|\!|\!|\cdot|\!|\!|})=1$, we necessarily have $|\!|\!|e_1+y|\!|\!|=1$, and 
(iii) is proved.

Finally, (iv) follows immediately from the definitions of the balls.
\end{proof}

Next lemma is the Fr\'echet smooth counterpart of the previous one. We refer to the appendix for an alternative proof of Lemma \ref{lemma: flatnormingF} which relies on Proposition \ref{p: LUR sum}.

\begin{lemma}\label{lemma: flatnormingF}
    Let $(X,\|\cdot\|)$ be a separable  Banach space with LUR dual norm. Then, for each   $\epsilon\in(0,1)$, there exist $\gamma>0$, an equivalent norm $|\!|\!|\cdot|\!|\!|$, and a bounded M-basis   $(e_n, f_n)_{n\in\N}$ on $X$ such that:
    \begin{enumerate}
        \item $|\!|\!|\cdot|\!|\!|$  is Fr\'echet smooth;
        \item $|\!|\!|e_n|\!|\!|=1$, for every $n\in\N$;
\item for each $y\in \ker (f_1)$ with $\|y\|\leq \gamma$, we have that $|\!|\!|e_1+y|\!|\!|=1$;
\item $\sqrt{1-\epsilon}|\!|\!|x|\!|\!|\leq \|x\|\leq\sqrt{1+\epsilon}|\!|\!|x|\!|\!|$, for every $x\in X$.
    \end{enumerate}  
\end{lemma}

\begin{proof}
Let $(x_{n}, x^{*}_{n})_{n\in \mathbb{N}}$ be a bounded M-basis on
$X$ such that $\|x^{*}_{1}\|^{*}=\|x_{1}\|=1$. Let
$\|\cdot \|_{1}$ be the equivalent norm on $X$ such that
\begin{equation*}
B_{\|\cdot \|_{1}}=B_{\|\cdot \|}\setminus [S\cup (-S)],
\end{equation*}
where
$S=S(B_{\|\cdot \|}, x^{*}_{1},1-\sqrt{1-\varepsilon })$. Observe that:
\begin{enumerate}[(a)]
    \item $\sqrt{1-\epsilon}\,B_{\|\cdot\|}\subset B_{\|\cdot\|_1}\subset B_{\|\cdot\|}$;
    \item since $\sqrt{1-\epsilon}\,x_1\in U_{\|\cdot\|}$, there exists $\delta>0$ such that, for each $y\in \ker (x^*_1)$ with $\|y\|\leq \delta$, we have  $\|x_1+y\|_1=\|x_1\|_1$. 
\end{enumerate}

Now, let  $|\!|\!|\cdot|\!|\!|$ be the equivalent norm defined, for $x\in X$, by
$$\textstyle|\!|\!|x|\!|\!|^2=\inf\{{\|u\|_1^2+\frac{1}{\epsilon}\|v\|^2};\, u,v\in X, u+v=x\}.$$
Thanks to \cite[Proposition~5.2]{DESOVESTAR}, we have:
\begin{itemize}
	\item $|\!|\!|x|\!|\!|\leq\|x\|_1\leq \sqrt{1+\epsilon}\,|\!|\!|x|\!|\!|$;
	\item the dual norm of $|\!|\!|\cdot|\!|\!|$ is given by:
	$$\textstyle |\!|\!|x^*|\!|\!|^*=\sqrt{[\|x^*\|^*_1]^2+\epsilon[\|x^*\|^*]^2}\ \ \ \ \ \ \ (x^*\in X^*).$$
	\item $|\!|\!|\cdot|\!|\!|$ is Fr\'echet smooth (since its dual norm is LUR). 
\end{itemize}
Hence, (i) and (iv) hold. Moreover, if, for each $n\in\N$, define $e_n=\frac{x_n}{|\!|\!|x_n|\!|\!|}$ and $f_n=|\!|\!|x_n|\!|\!|x_n^*$, then clearly $(e_n,f_n)_{n\in\N}$ is a bounded M-basis and (ii) holds.  

It remains to prove (iii). We claim that if $x\in X$ is such that $x_1^*(x)=1$ then 
$|\!|\!|x|\!|\!|\geq 1$. Indeed, let $u,v\in X$ be such that $x=u+v$, if we denote $\theta=x_1^*(u)$, since $\|x_1^*\|_1^*=\sup x_1^*(B_{\|\cdot\|_1})=\sqrt{1-\epsilon}$ and $\|x_1^*\|^*=\sup x_1^*(B_{\|\cdot\|})=1$, we clearly have:
$$\textstyle x_1^*(v)=1-\theta,\quad \|u\|_1\geq\frac{1}{\sqrt{1-\epsilon}}|\theta|,\quad  \|v\|\geq |1-\theta|.$$
Hence,
\begin{eqnarray*}
    \textstyle |\!|\!|x|\!|\!|^2&\geq& \textstyle  
\inf\{\frac{1}{(1-\epsilon)}\theta^2+\frac{1}{\epsilon}(1-\theta)^2;\, u,v\in X, u+v=x\}\\
&\geq&  \textstyle 
\min_{\theta\in\R}\left[\frac{1}{(1-\epsilon)}\theta^2+\frac{1}{\epsilon}(1-\theta)^2\right]\\
&=&  \textstyle\frac{1}{(1-\epsilon)}(1-\epsilon)^2+\frac{1}{\epsilon}(1-(1-\epsilon))^2=1,
\end{eqnarray*}
and the claim is proved.

In particular, we have $|\!|\!|x_1|\!|\!|\geq 1$.
Moreover, since we can write 
$$\textstyle x_1=\theta_{\min} x_1+(1-\theta_{\min}) x_1,$$
where $\theta_{\min}=1-\epsilon$, we have $|\!|\!|x_1|\!|\!|= 1$, and hence $x_1=e_1$.

Now, observe that if  $y\in \ker (f_1)=\ker(x_1^*)$ then $x_1^*(x_1+y)=1$ and hence, by our claim,
$|\!|\!|x_1+y|\!|\!|\geq 1$. Moreover, by (b), if $\gamma=\theta_{\min}\delta$,  for each $y\in \ker (x^*_1)$ with $\|y\|\leq \gamma$, we have  $\|\theta_{\min}x_1+y\|_1=\|\theta_{\min}x_1\|_1$.
Since, for each $y\in \ker (x^*_1)$ with $\|y\|\leq \gamma$, we can write 
$$\textstyle e_1+y=x_1+y=\theta_{\min} x_1+y+(1-\theta_{\min})x_1,$$
we have 
\begin{eqnarray*}
    \textstyle |\!|\!|e_1+y|\!|\!|^2=|\!|\!|x_1+y|\!|\!|^2&=& \textstyle  \inf\{{\|u\|_1^2+\frac{1}{\epsilon}\|v\|^2};\, u,v\in X, u+v=x\}\\
 &\leq&\textstyle \|\theta_{\min}x_1+y \|_1^2+\frac{1}{\epsilon}\|(1-\theta_{\min})x_1\|^2\\ 
&=&\textstyle  \frac{1}{(1-\epsilon)}(1-\epsilon)^2+\frac{1}{\epsilon}(1-(1-\epsilon))^2=1.
\end{eqnarray*}
Hence, $|\!|\!|e_1+y|\!|\!|=1$ whenever $y\in \ker (f_1)$ with $\|y\|\leq \gamma$, and the proof is concluded.
\end{proof}

We are in the position for proving the main results of the section.

\begin{theorem}\label{th: mainG}
 Let $(X,\|\cdot\|)$ be a separable Banach space. Then there exists a
 rotund G\^ateaux  smooth  equivalent norm  which is not midpoint locally uniformly rotund. Moreover, such a norm can be chosen arbitrarily close to $\|\cdot\|$.
\end{theorem}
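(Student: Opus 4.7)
The plan is to combine Lemma~\ref{lemma: flatnormingG} with a Draga-style rotundifying perturbation that preserves smoothness. Given the separable space $(X,\|\cdot\|)$, I would first pass to an equivalent norm $\|\cdot\|_0$ arbitrarily close to $\|\cdot\|$ whose dual is rotund (for instance a norm with LUR dual, provided by the standard renorming theorem for separable spaces), and then apply Lemma~\ref{lemma: flatnormingG} to $(X,\|\cdot\|_0)$ with a small parameter $\epsilon\in(0,1)$. This supplies an equivalent G\^ateaux smooth norm $|\!|\!|\cdot|\!|\!|$, a bounded M-basis $(e_n,f_n)_{n\in\N}$ with $|\!|\!|e_n|\!|\!|=1$, a constant $\delta>0$ realizing the flatness $|\!|\!|e_1+y|\!|\!|=1$ whenever $y\in\ker f_1$ and $\|y\|_0\leq\delta$, and the control $|\!|\!|\cdot|\!|\!|\leq\|\cdot\|_0\leq(1+\epsilon)|\!|\!|\cdot|\!|\!|$. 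The desired norm $\|\cdot\|_\sharp$ would then be defined by
\begin{equation*}
\|x\|_\sharp^2=|\!|\!|x|\!|\!|^2+c\sum_{n=1}^\infty 4^{-n}f_n(x)^2,
\end{equation*}
for a small parameter $c>0$; boundedness of the M-basis ensures the series converges and that $\|\cdot\|_\sharp$ is equivalent to $|\!|\!|\cdot|\!|\!|$, and choosing $\epsilon$ and $c$ sufficiently small makes $\|\cdot\|_\sharp$ arbitrarily close to $\|\cdot\|$.

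The three required properties would then be verified in parallel with the computations carried out for Draga's norm in Subsection~\ref{sec: draganorm}. For \emph{rotundity}, since $t\mapsto t^2$ is strictly convex and $(f_n)_{n\in\N}$ is total, the midpoint convexity argument applied to $\|\cdot\|_\sharp^2$ forces $f_n(x)=f_n(y)$ for every $n$, hence $x=y$. For \emph{G\^ateaux smoothness}, $|\!|\!|\cdot|\!|\!|^2$ is G\^ateaux smooth off the origin, while the added series defines a Fr\'echet smooth convex quadratic function (term-by-term differentiation is justified by the $4^{-n}$ weights and the uniform boundedness of $(f_n)$), so $\|\cdot\|_\sharp^2$ is G\^ateaux smooth off zero, and therefore so is $\|\cdot\|_\sharp$. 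For \emph{failure of MLUR at $e_1$}, choosing $\alpha>0$ such that $\alpha(1+\epsilon)\leq\delta$ and setting $x_n=e_1+\alpha e_n$, $y_n=e_1-\alpha e_n$ for $n\geq 2$, clauses (iii) and (iv) of the Lemma force $|\!|\!|x_n|\!|\!|=|\!|\!|y_n|\!|\!|=1$; a direct computation of the $f_n$-values then yields $\|x_n\|_\sharp,\|y_n\|_\sharp\to\|e_1\|_\sharp$, the midpoint equals $e_1$ identically, and $\|x_n-y_n\|_\sharp\geq 2\alpha$ for every $n$.

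The main subtlety I expect is that the rotundifying quadratic correction must not destroy the G\^ateaux smoothness supplied by Lemma~\ref{lemma: flatnormingG}. This is handled by the observation that the correction is itself Fr\'echet smooth, so smoothness of $\|\cdot\|_\sharp$ off the origin reduces to smoothness of $\|\cdot\|_\sharp^2$, which is immediate from its definition as a sum of smooth functions.
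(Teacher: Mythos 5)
Your proposal is essentially the paper's own proof: reduce to a rotund dual norm, invoke Lemma~\ref{lemma: flatnormingG}, add the weighted quadratic series $\sum_{n}4^{-n}f_n(x)^2$ to recover rotundity without destroying G\^ateaux smoothness, and exhibit failure of MLUR at $e_1$ via $e_1\pm\alpha e_n$ with $\alpha(1+\epsilon)\leq\delta$. One small slip: for a general separable $X$ you cannot always produce a norm whose \emph{dual} is LUR (that forces $X^*$ separable), but since your argument only needs a rotund dual norm -- which every separable space admits, as the paper obtains from \cite{DGZ} -- the parenthetical remark is harmless and the proof stands.
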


\begin{proof}
By \cite[Theorem~4.1 in \S II]{DGZ} and \cite[Theorem~2.6(i) in \S II]{DGZ} there exists an equivalent norm on $X$ which is arbitrarily close to the original norm $\|\cdot\|$ and its dual norm is rotund. Therefore, we may suppose without loss of generality that $(X,\|\cdot\|)$ has a rotund dual norm. Let $\varepsilon>0$. By Lemma \ref{lemma: flatnormingG} there exist an equivalent norm $|\!|\!|\cdot|\!|\!|$ and a bounded M-basis $(e_n,f_n)_{n\in\mathbb{N}}$ on $X$ such that (i)-(iv) hold. We define the equivalent norm 
\begin{equation}\label{e: RGnoMLUR}
|x|^2=|\!|\!|x|\!|\!|^2 + \sum_{n=1}^\infty (4^{-n}|f_n(x)|^2)
\end{equation}
on $X$. Since $(e_n,f_n)_{n\in\N}$ is a bounded M-basis the norm $|\cdot|$ is well-defined and rotund. G\^ateaux smoothness of the norm in \eqref{e: RGnoMLUR} follows by Lemma \ref{lemma: flatnormingG} (i). It remains to prove that  $|\cdot|$ is not MLUR. In order to do so, let 
\begin{equation*}
    x=e_1 \quad x_n=e_1+\frac{\epsilon}{1+\varepsilon}e_n \quad y_n=e_1-\frac{\epsilon}{1+\varepsilon}e_n.
\end{equation*}
We observe that, by Lemma \ref{lemma: flatnormingG} (ii), we have 
\begin{equation*}
    |x|^2=|\!|\!|e_1|\!|\!|^2 + \frac{1}{4} = \frac{5}{4}.
\end{equation*}
By Lemma \ref{lemma: flatnormingG} (ii) and (iv), it follows that
\begin{equation*}
  \bigg \| \frac{\epsilon}{1+\varepsilon}e_n\bigg\|\leq \epsilon |\!|\!|e_n|\!|\!|=\epsilon.
\end{equation*}
Therefore, by Lemma \ref{lemma: flatnormingG} (iii), we obtain
\begin{equation*}
    |x_n|^2= |\!|\!|e_1+\frac{\epsilon}{1+\varepsilon}e_n|\!|\!|^2 + \frac{1}{4} + \frac{1}{4^n}\frac{\epsilon^2}{(1+\epsilon)^2}=\frac{5}{4} + \frac{1}{4^n}\frac{\epsilon^2}{(1+\epsilon)^2},
\end{equation*}
for every $n\in \N$. By a similar argument we get 
\begin{equation*}
    |y_n|^2= |\!|\!|e_1-\frac{\epsilon}{1+\varepsilon}e_n|\!|\!|^2 + \frac{1}{4} + \frac{1}{4^n}\frac{\epsilon^2}{(1+\epsilon)^2}=\frac{5}{4} + \frac{1}{4^n}\frac{\epsilon^2}{(1+\epsilon)^2},
\end{equation*}
for every $n\in \N$. Therefore, we have
\begin{equation*}
    |x_n|\to |x|\quad \mbox{ and }\quad |y_n|\to |x|.
\end{equation*}
We observe, for every $n\in \N$, that
\begin{equation*}
    \bigg|\frac{x_n+y_n}{2} - x\bigg|=0
\end{equation*}
and 
\begin{equation*}
    |x_n-y_n|=\frac{2\epsilon}{1+\varepsilon}|e_n|=\frac{2\epsilon}{1+\varepsilon}\bigg(1+\frac{1}{4^n}\bigg)^{\frac{1}{2}}\geq \frac{2\epsilon}{1+\varepsilon}
\end{equation*}
Thus, the norm $|\cdot|$ is not MLUR at $x=e_1$.\\
The density follows by Lemma \ref{lemma: flatnormingG} (iv) and considering the equivalent norm
\begin{equation*}
|x|^2_{\varepsilon}=|\!|\!|x|\!|\!|^2 + \varepsilon\sum_{n=1}^\infty (4^{-n}|f_n(x)|^2)
\end{equation*}
on $X$.
\end{proof}

\begin{theorem}\label{th: mainF}
 Let $(X,\|\cdot\|)$ be a  Banach space with separable dual. Then there exists a
 WUR Fr\'echet smooth  equivalent norm  which is not midpoint locally uniformly rotund. Moreover, such a norm can be chosen arbitrarily close to $\|\cdot\|$.
\end{theorem}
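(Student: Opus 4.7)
The plan is to follow the proof of Theorem~\ref{th: mainG} with the obvious modifications, replacing Lemma~\ref{lemma: flatnormingG} by Lemma~\ref{lemma: flatnormingF} and arranging the underlying M-basis to be shrinking, so that the perturbation term delivers weak uniform rotundity rather than merely rotundity. First, by classical renorming results for spaces with separable dual (cf.\ \cite[Theorem~2.6]{DGZ} together with a standard duality argument), we may assume without loss of generality that $\|\cdot\|^*$ is LUR and that $\|\cdot\|$ is arbitrarily close to the originally given norm. Moreover, since $X^*$ is separable, $X$ admits a shrinking bounded M-basis. Inspecting the proof of Lemma~\ref{lemma: flatnormingG} (which underlies Lemma~\ref{lemma: flatnormingF}), starting the construction from a shrinking M-basis $(x_n,x_n^*)$ produces an M-basis $(e_n,f_n)$ which is again shrinking, because $f_n=|\!|\!|x_n|\!|\!|\,x_n^*$ is just a positive rescaling of $x_n^*$. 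Hence, for any prescribed $\varepsilon\in(0,1)$, we obtain $\gamma>0$, an equivalent Fr\'echet smooth norm $|\!|\!|\cdot|\!|\!|$, and a shrinking bounded M-basis $(e_n,f_n)_{n\in\N}$ satisfying items (i)--(iv) of Lemma~\ref{lemma: flatnormingF}.

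Next, for an auxiliary small parameter $\eta\in(0,1)$, I would define
\begin{equation*}
|x|^2=|\!|\!|x|\!|\!|^2+\eta\sum_{n=1}^{\infty}4^{-n}|f_n(x)|^2.
\end{equation*}
By Lemma~\ref{lemma: flatnormingF}(i) the square $|\!|\!|\cdot|\!|\!|^2$ is Fr\'echet smooth; the series converges uniformly on bounded sets (thanks to boundedness of the M-basis) and defines a $C^\infty$ convex function. Thus $|\cdot|^2$ is Fr\'echet smooth, and hence $|\cdot|$ is Fr\'echet smooth on $X\setminus\{0\}$. The proof that $|\cdot|$ fails to be MLUR is verbatim the one in Theorem~\ref{th: mainG}: setting $x=e_1$, $x_n=e_1+\frac{\gamma}{1+\varepsilon}e_n$, $y_n=e_1-\frac{\gamma}{1+\varepsilon}e_n$, properties (ii)--(iv) of Lemma~\ref{lemma: flatnormingF} yield $|x_n|,|y_n|\to|x|$, $|\tfrac{x_n+y_n}{2}-x|=0$, and $|x_n-y_n|\geq\frac{2\gamma}{1+\varepsilon}$.

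The genuinely new ingredient, and the main technical point, is the verification of weak uniform rotundity; this is where shrinkingness is used. Let $u_n,v_n\in S_{(X,|\cdot|)}$ satisfy $|u_n+v_n|\to 2$. Using the scalar identity $2a^2+2b^2-(a+b)^2=(a-b)^2\geq 0$, the quantity $2|u_n|^2+2|v_n|^2-|u_n+v_n|^2$ decomposes as the sum of the analogous non-negative expression for $|\!|\!|\cdot|\!|\!|$ plus $\eta\sum_{k}4^{-k}|f_k(u_n-v_n)|^2$. Since the left-hand side tends to $0$, each summand does too, so $f_k(u_n-v_n)\to 0$ for every $k\in\N$. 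Since $(u_n-v_n)$ is norm-bounded and $\overline{\mathrm{span}}\{f_k\colon k\in\N\}=X^*$ by shrinkingness, a standard density argument yields $u_n-v_n\to 0$ weakly, proving WUR. Finally, proximity of the constructed norm to the original $\|\cdot\|$ is achieved by choosing $\varepsilon$ and $\eta$ sufficiently small. The principal obstacle is precisely ensuring that the shrinking property of the M-basis is preserved through the construction of Lemma~\ref{lemma: flatnormingF}; as observed above this is essentially automatic, since only a positive rescaling of the dual functionals is performed.
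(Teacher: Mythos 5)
Your proposal is correct and follows essentially the same route as the paper: renorm via the Deville--Godefroy--Zizler results to get an LUR dual norm, apply Lemma~\ref{lemma: flatnormingF} (starting from a shrinking bounded M-basis, which survives the positive rescaling $e_n=x_n/|\!|\!|x_n|\!|\!|$, $f_n=|\!|\!|x_n|\!|\!|x_n^*$), and then perturb by the weighted $\ell_2$-sum of the $f_n$'s exactly as in Theorem~\ref{th: mainG}. The paper's own proof is terse, merely saying it ``follows in the same way'' using Lemma~\ref{lemma: flatnormingF}; you spell out the two details it leaves implicit, namely that shrinkingness is preserved and the WUR verification via the decomposition of $2|u_n|^2+2|v_n|^2-|u_n+v_n|^2$ into nonnegative pieces, both of which are correct and are precisely the ingredients the paper already flagged in its discussion of Draga's norm in Section~\ref{sec: draganorm}.
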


\begin{proof}
By \cite[Theorem~4.1(ii) in \S II]{DGZ} and \cite[Theorem~2.6(ii) in \S II]{DGZ} there exists an equivalent norm on $X$ which is arbitrarily close to the original norm $\|\cdot\|$ and its dual norm is LUR.  Then, the proof follows in the same way as the previous theorem by using Lemma \ref{lemma: flatnormingF} (or equivalently Lemma \ref{lemma flatnormindFalternative}) instead of Lemma \ref{lemma: flatnormingG}.
\end{proof}

\begin{remark}\label{r: URED} 
    In \cite{HQ} P. H\'ajek and A. Quilis have constructed in every infinite-dimensional separable Banach space a norm which is LUR but not URED (uniformly rotund in every direction), solving positively \cite[Problem 52.3.4]{GMZ}. On the other hand,
    it is worth to mention that by a standard application of \cite[Exercise 9.20]{FHHMZ} the norm defined in Theorem \ref{th: mainG} as well as the norm defined by \eqref{e: normDraga} is URED. Therefore, both norms provide an affirmative answer to \cite[Problem 52.3.5]{GMZ} which states: \textit{Can every infinite-dimensional separable space be renormed to be URED but not LUR?}. 
\end{remark}

\begin{remark}
    In \cite{DEBESOM-ALUR} the first and the third named authors provide, in every infinite-dimensional separable Banach space, an example of rotund G\^ateaux smooth norm which satisfies the Kadec property but it is not LUR. This construction provide a positive answer to \cite[Problem 52.1.1.5]{GMZ}. Observe that Theorem \ref{th: mainG} provides an alternative solution to the same problem. On the other hand, the norm defined in \cite{DEBESOM-ALUR} does not solve 
    \cite[Problem 52.1.1.3]{GMZ}. Indeed,  if a norm is rotund and satisfies the Kadec property then it is MLUR (see \cite[Corollary 408]{GMZ}).
\end{remark}

The next question have been asked to us by Tommaso Russo and to the best of our knowledge it seems to be open.

\begin{problem}
    Can every infinite-dimensional separable Banach space be re\-normed to be rotund [and  G\^ateaux  smooth]  but with no LUR point in the unit sphere?
\end{problem}

In this direction, it is worth to notice that in \cite{Mo83} P. Morris proved that any separable Banach space containing an isomorphic copy of $c_0$ admits a rotund equivalent norm such that no point of the unit sphere is WMLUR. By using a result in \cite{Ha95} it is possible to show that if $X$ is a polyhedral space, then it admits a $C^{\infty}$ rotund equivalent norm such that no point of the unit sphere is WMLUR (see \cite[Theorem~2.1]{GMZ15}). Finally, in \cite{GMZ15} it is proved that every $C^2$ norm on a polyhedral space admits no MLUR point in the unit sphere. In particular it exists, in every separable polyhedral space, an equivalen WUR, $C^2$ norm with no MLUR points (see \cite[Theorem~2.4]{GMZ15}). We continue by showing that it is possible to define, in each separable Banach space, an equivalent rotund norm in which the set of MLUR points is porous in the unit sphere. Let us start by recalling the definition of porous set.

\begin{definition}
    A subset $S$ of a metric space $(T,d)$ is called {\em porous} if there are numbers
$\lambda\in(0,1)$ and $\delta_0>0$ such that for every $x\in S$  and every $0<\delta<\delta_0$ there exists $y\in T$  such that
$d(x,y)<\delta$ and $S\cap \{t\in T;\, d(t,y)<\lambda\delta\}=\emptyset$.
\end{definition}

\begin{theorem}\label{th: MLUR porous}
Each infinite-dimensional separable Banach space admits an equivalent rotund norm such that the set of  MLUR points is porous in the unit sphere. 
\end{theorem}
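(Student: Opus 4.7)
The plan is to adapt the construction of Theorem~\ref{th: mainG} to create simultaneously a flat face of the unit ball near each member of a dense sequence in the sphere, and to conclude porosity from the resulting dense family of open non-MLUR regions of uniform inradius.

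First, after an initial renorming we may assume $(X,\|\cdot\|)$ has a rotund dual norm. Using a refinement of the classical existence theorem for bounded M-bases (via a Markushevich-type extension of a dense linearly independent subset of $S_{\|\cdot\|}$), I fix a bounded M-basis $(e_n,f_n)_{n\in\N}$ with $\|e_n\|=\|f_n\|^*=1$ and such that $\{e_n:n\in\N\}$ is $\|\cdot\|$-dense in $S_{\|\cdot\|}$. For a small $\epsilon\in(0,1)$ set
\begin{equation*}
B' = B_{\|\cdot\|}\cap\bigcap_{n\in\N}\bigl\{x\in X:|f_n(x)|\leq 1-\epsilon\bigr\},\quad B_{|\!|\!|\cdot|\!|\!|} = \overline{B' + \epsilon B_{\|\cdot\|}},
\end{equation*}
and define
\begin{equation*}
|x|^2 = |\!|\!|x|\!|\!|^2 + \sum_{n=1}^{\infty}4^{-n}|f_n(x)|^2.
\end{equation*}
Mimicking the proof of Lemma~\ref{lemma: flatnormingG}, $|\!|\!|\cdot|\!|\!|$ is equivalent to $\|\cdot\|$ with dual norm $\|\cdot\|_{B'}^*+\epsilon\|\cdot\|^*$ (hence rotund), and there exists $\delta>0$, depending only on $\epsilon$, such that $|\!|\!|e_n+y|\!|\!|=1$ for every $n$ and every $y\in\ker(f_n)$ with $\|y\|\leq\delta$. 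As in Theorem~\ref{th: mainG}, $|\cdot|$ is then a rotund equivalent norm on $X$.

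For every $n\in\N$, repeating verbatim the computation of Theorem~\ref{th: mainG}, the sequences $(e_n+y)\pm t\, e_m$ (with $t\in(0,\delta/2)$ fixed and $m\to\infty$, $m\neq n$) witness that each point of
\begin{equation*}
V_n := \bigl\{(e_n+y)/|e_n+y|:y\in\ker(f_n),\,\|y\|<\delta/2\bigr\}
\end{equation*}
fails to be MLUR in $(X,|\cdot|)$. Using the decomposition $z=f_n(z)\,e_n+(z-f_n(z)e_n)$ valid for $z$ near $e_n/|e_n|$, together with the uniform bounds $|e_n|^2=1+4^{-n}\in[1,5/4]$ and $\|f_n\|^*=1$, one checks that $V_n$ is open in $S_{(X,|\cdot|)}$ and contains a $|\cdot|$-open ball of some radius $r>0$ around $e_n/|e_n|$, with $r$ independent of $n$.

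Since $\{e_n\}$ is $\|\cdot\|$-dense in $S_{\|\cdot\|}$ and the radial projection is a homeomorphism between the two spheres, $\{e_n/|e_n|\}$ is $|\cdot|$-dense in $S_{(X,|\cdot|)}$. With $\lambda=1/2$ and $\delta_0=r$, for any MLUR point $x\in S_{(X,|\cdot|)}$ and any $\delta\in(0,\delta_0)$ I pick $n$ with $|x-e_n/|e_n||<\delta$; then $B(e_n/|e_n|,\lambda\delta)\subseteq B(e_n/|e_n|,r)\subseteq V_n$ contains no MLUR point, yielding porosity. The two principal technical obstacles will be: (a) producing the initial bounded M-basis with $\{e_n\}$ dense in $S_{\|\cdot\|}$, and if this cannot be arranged directly, replacing the slicing functionals $f_n$ by Hahn--Banach support functionals of a dense sequence $(v_k)\subset S_{\|\cdot\|}$ and building the MLUR-failure perturbations as finite combinations $e_{m_j}-\alpha_j e_{m'_j}$ of basis vectors lying in the appropriate kernel; and (b) extending Lemma~\ref{lemma: flatnormingG} from one slice to countably many simultaneous slices while keeping the flat-face thickness $\delta>0$ uniform in $n$ and the dual-norm formula intact.
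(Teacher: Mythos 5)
Your obstacle~(b) is not a technical nuisance to be absorbed: it is fatal to the construction as written, and the paper avoids the issue by a completely different route. Suppose you had a bounded M-basis $(e_n,f_n)$ with $\|e_n\|=\|f_n\|^*=f_n(e_n)=1$ and $\{e_n\}$ dense in $S_{\|\cdot\|}$, and set $B'=B_{\|\cdot\|}\cap\bigcap_m\{|f_m(x)|\leq 1-\epsilon\}$. To get a flat face of $B_{|\!|\!|\cdot|\!|\!|}$ at $e_n$ of thickness $\delta$ independent of $n$, you need $(1-\epsilon)e_n+y\in B'$ for all $y\in\ker(f_n)$ with $\|y\|\leq\delta$, and in particular $|f_m\bigl((1-\epsilon)e_n+y\bigr)|\leq 1-\epsilon$ for every $m\neq n$. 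But since $\{e_n\}$ is dense in the sphere there are pairs $m\neq n$ with $\|e_m-e_n\|$ as small as you like, hence $|f_m(e_n)|\geq f_m(e_m)-\|e_m-e_n\|=1-\|e_m-e_n\|$ is arbitrarily close to $1$; for such a pair, the bound $(1-\epsilon)|f_m(e_n)|+|f_m(y)|\leq 1-\epsilon$ forces $\|y\|\leq(1-\epsilon)(1-|f_m(e_n)|)$, which tends to $0$. So no uniform $\delta$ exists, and your porosity constant $r$ collapses. (The same conflict also makes it unclear that such an M-basis — dense $e_n$'s normed by $f_n$'s, all of norm one — exists at all, which was your obstacle~(a).)

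The paper takes a genuinely different path that sidesteps this tension. Instead of manufacturing countably many flat faces by slicing, it invokes Partington's renorming: an equivalent norm $\|x\|=\sup_n|f_n(x)|$ together with $e_n,f_n$ satisfying $f_n(e_n)=\|f_n\|^*=1$ and the crucial separation $|f_n(e_k)|<\lambda<1$ for $k\neq n$. Here the $e_n$ are \emph{not} dense in the sphere (they cannot be, precisely by that separation), yet the sup-norm has intrinsic flat faces $S_n=S_{\|\cdot\|}\cap\{f_n=1\}$, and a lemma of Moreno shows that the union $D$ of the relative interiors $\pm\inte_{H_n}S_n$ has porous complement in $S_{\|\cdot\|}$ — essentially because any point of the sphere nearly attains $\sup_n|f_n(x)|=1$ and can be pushed a definite distance toward the interior of the nearby face. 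Then the quadratic perturbation $\|x\|_1^2=\|x\|^2+\sum 4^{-n}|f_n(x)|^2$ is rotund, and for $x\in D$ one defeats MLUR at $x/\|x\|_1$ by moving along vectors $v_N\in\epsilon S_{\|\cdot\|}\cap\ker f_{n_0}\cap\ker f_1\cap\cdots\cap\ker f_N$, using the flat face to keep $\|x\pm v_N\|=1$ while the quadratic term tends to $0$. In short: the paper gets the dense family of flat regions for free from the geometry of a $c_0$-like norm, whereas your plan tries to force it by slicing at a dense set of points, which is exactly what destroys the uniform face thickness.
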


\begin{proof}
By a result due to R. Partington (see \cite[Theorem~1]{P}), there exist $\lambda\in (0,1)$, an equivalent norm $\|\cdot\|$ on $X$, and
sequences $\{e_n\}_{n\in\N}\subset X$, $\{f_n\}_{n\in\N}\subset X^*$ such that:
\begin{enumerate}[(a)]
\item $f_n(e_n)=\|f_n\|^*=1$, whenever $n\in\N$;
\item $\|x\|=\sup_n |f_n(x)|$, whenever $x\in X$;
\item $|f_n(e_k)|<\lambda$, whenever $n,k\in\N$ and $k\neq n$.
\end{enumerate}
Let us denote, for $n\in \N$,
$$H_n=\{x\in X; \, f_n(x)=1\},\qquad S_n=S_{(X,\|\cdot\|)}\cap H_n,\qquad D=\pm\bigcup_n \inte_{H_n} (S_n).$$
We claim that $P:=S_{(X,\|\cdot\|)}\setminus D$ is a porous set in $S_{(X,\|\cdot\|)}$.
Let $x\in S_{(X,\|\cdot\|)}$  and  let $\delta\in(0,1)$. Then, proceeding as in the proof of \cite[Proposition 3.2]{M}, it is easy to see that there exist $n\in\N$ and $z\in S_n$ such that the following conditions hold:
\begin{enumerate}
    \item  $\|z-x\|< \delta$;
    \item if we define $w=\delta e_n+(1-\delta)z$ then  $$\textstyle \left(w+\delta \frac{1-\lambda}{2}B_{(X,\|\cdot\|)}\right)\cap S_{(X,\|\cdot\|)}\subset S_n\qquad\text{and}\qquad \|w-z\|\leq 2\delta.$$
\end{enumerate}
In particular, we have proved that, for each $x\in S_{(X,\|\cdot\|)}$ and $\delta\in(0,1)$, there exists $w\in S_{(X,\|\cdot\|)}$ such that $\|w-x\|\leq3\delta$ and such that
$$\textstyle\left\{t\in S_{(X,\|\cdot\|)};\, \|t-w\|<\delta \frac{1-\lambda}{2}\right\}\subset  D.$$
Our claim is proved, since the constant $\frac{1-\lambda}{2}$ depends just on the norm $\|\cdot\|$.

 Now, consider the equivalent  norm $\|\cdot\|_1$  on $X$ defined by
\begin{equation*}
\|x\|_1^2=\|x\|^2 + \sum_{n=1}^\infty 4^{-n}|f_n(x)|^2,\qquad\qquad x\in X.
\end{equation*}
Observe that, since $\{f_n\}_{n\in\N}$ is a bounded separating family for $X$, $\|\cdot\|_1$ is an equivalent rotund norm on $X$.

We claim that the set of all  MLUR points for $B_{(X,\|\cdot\|_1)}$ is a porous set in the unit sphere $S_{(X,\|\cdot\|_1)}$. In order to prove our claim, since the map $x\mapsto \frac{x}{\|x\|_1}$ is a bi-Lipschitz function between $S_{(X,\|\cdot\|)}$ and $S_{(X,\|\cdot\|_1)}$,
 it is sufficient to prove that if $x\in D$ then $\frac{x}{\|x\|_1}$ is not an MLUR point for $B_{(X,\|\cdot\|_1)}$. Let us fix $x\in D$. Without any loss of generality, we can suppose that there exist $n_0\in\N$ and $\epsilon\in(0,1)$ such that
$$\left(x+\epsilon B_{(X,\|\cdot\|)}\right)\cap H_{n_0}\subset S_{n_0}.$$
For each $N\in\N$, consider an element $v_N$ belonging to the set
$$\epsilon S_{(X,\|\cdot\|)}\cap\ker(f_{n_0})\cap\ker(f_1)\cap\ldots\cap\ker(f_N).$$
Then we clearly have that $x\pm v_N\in S_{n_0}\subset S_{(X,\|\cdot\|)}$  and that:
\begin{eqnarray*}
\|x\pm v_N\|_1^2 &=& \|x\pm v_N\|^2 + \sum_{n=1}^\infty 4^{-n}|f_n(x\pm v_N)|^2\\
&=& 1 + \sum_{n=1}^N 4^{-n}|f_n(x)|^2+ \sum_{n=N+1}^\infty 4^{-n}|f_n(x\pm v_N)|^2.
\end{eqnarray*}
Hence,
\begin{eqnarray*}
\left|\|x\pm v_N\|_1^2-\|x\|_1^2\right| &=&  \left|\sum_{n=N+1}^\infty 4^{-n}|f_n(x\pm v_N)|^2-\sum_{n=N+1}^\infty 4^{-n}|f_n(x)|^2\right|\\
&\leq& 2(1+\epsilon)^2\sum_{n=N+1}^\infty 4^{-n}.
\end{eqnarray*}
In particular, we have that $\|x+ v_N\|_1\to \|x\|_1$ and $\|x- v_N\|_1\to \|x\|_1$, as $N\to\infty$. From the fact that $v_N\not\to 0$ {(notice that $\|v_N\|=\varepsilon$ for each $N\in\N$)}, as $N\to\infty$, it follows that $\frac{x}{\|x\|_1}$ is not an MLUR point for $B_{(X,\|\cdot\|_1)}$, and our claim is proved.
\end{proof}

Notice that in Theorem~\ref{th: MLUR porous} we cannot replace the condition  that the set of MLUR points is porous with the condition that the set of MLUR points is empty. Indeed, if $X$ is a Banach space with the Radon-Nikodym property (RNP) then every renorming of $X$ is such that the corresponding unit ball admits strongly exposed points (see, e.g.,  \cite[Theorem~3.5.4]{Bourgin}) and hence it admits  in particular MLUR points (cf. the introduction of  \cite{Mo83}). As a corollary we have, for example, that 
the Banach space $\ell_1$, having the RNP, does not admit any (rotund) renorming without MLUR points.
Observe that for LUR points the situation is different, the following example, inspired by the recent paper \cite{CH}, shows that $\ell_1$  admits an equivalent  G\^ateaux  smooth rotund norm $\|\cdot\|$ such that the corresponding unit sphere does not have LUR points. In order to do that, we need to recall the definition of octahedral norm.

\begin{definition}
    A Banach space $(X,\|\cdot\|)$ (or its norm) is \textit{octahedral} if, for every finite dimensional subspace $F$ of $X$ and every $\epsilon>0$, there is a $y\in S_X$ such that
    \begin{equation}\label{e: octa}
          \|x+\lambda y\|\geq (1-\epsilon)(\|x\| + |\lambda|)
    \end{equation}
for every $x\in F$ and every $\lambda\in \R$.
\end{definition}

It is standard to show that in the definition above one can equivalently require that \eqref{e: octa} holds for every $x\in S_F$ and every $\lambda\in \R$ (see for example \cite[Proposition 2.2]{HallerLangementsPoldvere}). 
The following result shows that the unit sphere of an octahedral space does not admit LUR points. Although the proof of this result is elementary, we decided to include it because we did not find it in the literature.

\begin{proposition}\label{p: octnoLUR}
The unit sphere of an octahedral   Banach space $X$ does not admit LUR points.
\end{proposition}

\begin{proof} 
     Suppose on the contrary  that  $x_0\in S_X$ is an LUR point of $B_X$. Let $F=\operatorname{span}(x_0)$. For each $n\in\N$, by definition of octahedral norm, there exists a point $y_n\in S_X$ such that $$\|x+y_n\|\geq (1-\frac1n)(\|x\|+1),$$ 
for every $x\in F$. Therefore both $\|x_0+y_n\|$ and $\|-x_0+y_n\|$ go to $2$ as $n\to +\infty$. Thus, since $y_n$ cannot converge both to $x_0$ and $-x_0$, we get a contradiction.
\end{proof}

Finally, we recall that a Banach space $X$ admits an octahedral renorming if and only if $X$ contains an isomorphic copy of $\ell_1$ (see \cite[Theorem 2.5 in \textsection III]{DGZ}).

\medskip
In \cite{CH} C. Cobollo and P. H\'ajek define an equivalent norm on $\ell_1$, in fact in every Banach space admitting a G\^ateaux smooth renorming and containing a complemented copy of $\ell_1$, which is simultaneously G\^ateaux smooth and octahedral. We slightly modify their norm in order to obtain an equivalent norm on $\ell_1$ which is rotund, G\^ateaux smooth and octahedral (see Example \ref{e: octG} below). Therefore, applying Proposition \ref{p: octnoLUR} we get an equivalent norm on $\ell_1$ which is rotund, G\^ateaux smooth, and whose unit sphere has no LUR points. We denote by $|\!|\!|\cdot|\!|\!|$ the norm introduced in \cite{CH}, in the particular case in which the space considered is exactly $\ell_1$. We recall here only the necessary definitions and properties of this norm and refer to the original paper for all the details.
\begin{itemize}
    \item $P_n\colon\ell_1\to \ell_1$ is the projection onto $\operatorname{span}\{e_1,\dots,e_n\}$, where $\{e_n\}_{n\in\N}$ is the usual Schauder basis of $\ell_1$.
    \item $|\!|\!|x|\!|\!|=\sup_{n\in\N}\{|\!|\!|P_n x|\!|\!|_n\}$ for $x\in\ell_1$, where $|\!|\!|\cdot|\!|\!|_n$ is a suitable renorming of $X_n=P_n[\ell_1]$.
    \item $|\!|\!|e_n|\!|\!|=1$ for every $n\in\N$.
    \item Let $\epsilon>0$. Then there exists $n_0\in\N$ such that for every $n\geq n_0$
    \[
    |\!|\!|P_{n-1}x+e_n\alpha|\!|\!|\geq (1-\epsilon)(|\!|\!|P_{n-1}x|\!|\!|+|\alpha|),\qquad x\in\ell_1.
    \]
\end{itemize}
Where the last property holds combining \cite[Lemma 2.2]{CH}, \cite[Corollary 2.3]{CH}, and \cite[Proposition 2.6]{CH}.
We are in the position of defining the equivalent norm of $\ell_1$.
\begin{example}\label{e: octG} 
Let $|\!|\!|\cdot|\!|\!|$ be the equivalent norm of $\ell_1$ defined above. Let 
\[
 \|x\|=|\!|\!|x|\!|\!| + \sqrt{\sum_{n=1}^{+\infty}\frac{|x_n|^2}{4^n}}.
\]
The norm $\|\cdot\|$ is obviously rotund and G\^ateaux smooth. It remains to show that  $S_{(\ell_1,\|\cdot\|)}$ has no LUR points. In order to do that, in light of Proposition~\ref{p: octnoLUR}, it is enough to show that the space $(\ell_1,\|\cdot\|)$ is octahedral. Let $F\subset \ell_1$ be a finite dimensional subspace and $\epsilon>0$. Let $\delta>0$ be such that $\eta(\delta):=1-\delta^3+3\delta^2-4\delta\geq 1-\epsilon$. Such a $\delta>0$ exists since $\eta(\delta)\nearrow 1$ as $\delta\searrow 0$. There exists $n\in \N$ such that
\begin{itemize}
\item $\|x-P_n x\|\leq \delta$, for every $x\in S_{(F,\|\cdot\|)}$. Notice that by \cite{FHHMZ}*{Corollary~3.87} the sequence $\{P_n\}_{n\in\N}$ converges uniformly on compact sets to the identity map on $\ell_1$;
\item $|\!|\!|P_{n}x+e_{n+1}\beta|\!|\!|\geq (1-\delta)(|\!|\!|P_{n}x|\!|\!|+|\beta|)$ for every $x\in \ell_1$ and $\beta \in \R$;
\item $\frac{1}{1+2^{-n-1}}\geq 1-\delta$.
\end{itemize}
Then, for $x\in S_{(F,\|\cdot\|)}$ and $\alpha\in\R$, we have
\begin{equation*}
    \begin{split}
        \|x+\frac{\alpha}{1+2^{-n-1}}e_{n+1}\|&\geq \|P_n x +\frac{\alpha}{1+2^{-n-1}}e_{n+1}\|-\|x - P_n x\|\\
        &\geq |\!|\!|P_n x +\frac{\alpha}{1+2^{-n-1}}e_{n+1}|\!|\!| + \sqrt{\sum_{k=1}^{n}\frac{|x_k|^2}{4^k}} -\delta\\
        &\geq (1-\delta) (|\!|\!|P_n x|\!|\!|+ \frac{|\alpha|}{1+2^{-n-1}}) + \sqrt{\sum_{k=1}^{n}\frac{|x_k|^2}{4^k}} -\delta\\
      &\geq (1-\delta)^2(\|P_nx\| +|\alpha|) -\delta\\
        &\geq (1-\delta)^2(\|x\| +|\alpha| - \delta) -\delta\\
        &\geq (1-\delta)^2(\|x\| +|\alpha| - \delta(\|x\| +|\alpha|)) -\delta(\|x\| +|\alpha|)\\
        &=(1-\delta^3+3\delta^2-4\delta)(\|x\| +|\alpha|)\\
        &\geq (1-\varepsilon)(\|x\| +|\alpha|).
    \end{split}
\end{equation*}
Since $\left\|\frac{e_{n}}{1+2^{-n}}\right\|=1$ ($n\in\N$), we have that the space $(\ell_1,\|\cdot\|)$ is octahedral.
\end{example}

\section{Appendix}\label{sec: appendix}

We provide an alternative proof of Lemma \ref{lemma: flatnormingF}. Notice that the following statement is slightly different from the one in Lemma \ref{lemma: flatnormingF}. We will omit the full detailed proof, since it is enough to repeat the argument
 of Lemma \ref{lemma: flatnormingG} and applying Proposition \ref{p: LUR sum} below instead of \cite[Proposition 152]{GMZ} and applying \cite[Theorem 143(iii)]{GMZ} instead of \cite[Theorem 143(i)]{GMZ}.

\begin{lemma}\label{lemma flatnormindFalternative}
    Let $(X,\|\cdot\|)$ be a separable Banach space with LUR dual norm. Then, for each   $\epsilon\in(0,1)$, there exist an equivalent norm $|\!|\!|\cdot|\!|\!|$, and a bounded M-basis   $(e_n, f_n)_{n\in\N}$ on $X$ such that:
    \begin{enumerate}
        \item $|\!|\!|\cdot|\!|\!|$  is Fr\'echet smooth;
        \item $|\!|\!|e_n|\!|\!|=1$, for every $n\in\N$;
\item for each $y\in \ker (f_1)$ with $\|y\|\leq \epsilon$, we have that $|\!|\!|e_1+y|\!|\!|=1$;
\item $|\!|\!|x|\!|\!|\leq \|x\|\leq (1+\epsilon)|\!|\!|x|\!|\!|$, for every $x\in X$.
    \end{enumerate}  
\end{lemma}

The rest of the section is devoted to the key ingredient for proving Lemma \ref{lemma flatnormindFalternative}.

\subsection{Sum of equivalent LUR norms} 
Let $(X,\|\cdot\|_1)$ be a Banach space and $\|\cdot\|_2$ be a norm on $X$ such that $\|x\|_2\leq C\|x\|_1$, for some $C>0$ and every $x\in X$. Suppose that the norm $\|\cdot\|_2$ is rotund, then by a classical result (see e.g. \cite[Proposition 152]{GMZ}), the equivalent norm $\|\cdot\|=\|\cdot\|_1 + \|\cdot\|_2$ on $X$ is rotund too. On the other hand,  if we suppose that the norm $\|\cdot\|_2$ is LUR, it is not always true that the equivalent norm $\|\cdot\|=\|\cdot\|_1 + \|\cdot\|_2$ on $X$ is LUR, see for example \cite[Section 9.4.2]{GMZ}. The aim of this section is to show that if we assume that the norm $\|\cdot\|_2$ is LUR and additionally is equivalent to $\|\cdot\|_1$ then the norm $\|\cdot\|=\|\cdot\|_1+\|\cdot\|_2$ is LUR too.

The following fact is well-known (see, e.g., the proof of \cite[Fact~7.7]{FHHMZ}), however for the sake of completeness we include a proof.

\begin{fact}\label{fact: normalize}
    Let $(X,|\cdot|)$ be a normed space and $x,y\in X\setminus\{0\}$. If $|y|\leq|x|$ then we have
    $$\textstyle 2\geq\left|\frac{x}{|x|}+\frac{y}{|y|}\right|\geq 2-\frac{1}{|y|}(|x|+|y|-|x+y|).$$
\end{fact}
\begin{proof} The first inequality is trivial, for the latter it is sufficient to observe that 
$$\left|\frac{x}{|x|}+\frac{y}{|y|}\right|\geq \left|\frac{y}{|y|}+\frac{x}{|y|}\right|-\left|\frac{x}{|y|}-\frac{x}{|x|}\right|= 2-\frac{1}{|y|}(|x|+|y|-|x+y|)$$
\end{proof}

We shall also need the following elementary fact, whose proof is left to the reader.

\begin{fact} \label{fact: normalize2}
    Let $(X,\|\cdot\|)$ be a normed space, $0<a<b$, and $\{t_n\}\subset [a,b]$. Let $x_n,x\in S_X$ ($n\in\N$) be such that $\|x_n-t_n x\|\to 0$. Then $x_n\to x$ in norm.   
\end{fact}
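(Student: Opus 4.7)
The plan is to observe that the hypothesis $\|x_n - t_n x\| \to 0$ forces $t_n \to 1$, after which a single triangle inequality finishes the argument.

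First I would apply the reverse triangle inequality to get
\[
\bigl| \|x_n\| - \|t_n x\| \bigr| \leq \|x_n - t_n x\|.
\]
Since $t_n \in [a,b]$ with $a>0$, we have $\|t_n x\| = t_n \|x\| = t_n$, and $\|x_n\| = 1$. Hence $|1 - t_n| \to 0$, i.e., $t_n \to 1$. (This is the only place where the lower bound $a > 0$ is used; the upper bound $b$ is not really needed beyond ensuring $\{t_n\}$ stays bounded, which is automatic from $t_n\to1$.)

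Then I would simply write
\[
\|x_n - x\| \leq \|x_n - t_n x\| + \|t_n x - x\| = \|x_n - t_n x\| + |t_n - 1|,
\]
and both terms on the right tend to $0$, yielding $x_n \to x$ in norm.

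There is essentially no obstacle here — the fact is a routine consequence of the reverse triangle inequality together with one application of the triangle inequality, which is why the authors leave it to the reader. The only subtlety worth remarking on is that the interval hypothesis $t_n \in [a,b]$ with $a>0$ is merely a convenient way to guarantee $t_n$ does not degenerate to $0$ so that the normalization $\|t_n x\| = t_n$ is valid from the outset; once $t_n \to 1$ is established, the positivity is automatic for large $n$ anyway.
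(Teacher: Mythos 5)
Your proof is correct, and since the paper explicitly leaves this fact to the reader there is no in-paper argument to compare against; the route you take (reverse triangle inequality to force $t_n \to 1$, then one triangle inequality) is the natural one and is surely what the authors had in mind. Your side remark about the hypotheses is also accurate: only the sign condition $t_n>0$ is really used, since $t_n\to 1$ is then forced by the hypothesis itself.
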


\begin{proposition}\label{p: LUR sum}
    Let $(X,\|\cdot\|_1)$ be a normed space and let $\|\cdot\|_2$ be an equivalent LUR norm on $X$. Then the equivalent norm  on $X$ defined by $$\|\cdot\|=\|\cdot\|_1+\|\cdot\|_2$$ is LUR.
\end{proposition}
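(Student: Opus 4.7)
Fix $x\in X\setminus\{0\}$ and a sequence $(x_n)\subset X$ with $\lim_n\bigl(2\|x\|^2+2\|x_n\|^2-\|x+x_n\|^2\bigr)=0$; one must show that $\|x_n-x\|\to 0$. The first move is to rewrite
$$2\|x\|^2+2\|x_n\|^2-\|x+x_n\|^2 = (\|x\|-\|x_n\|)^2 + \bigl((\|x\|+\|x_n\|)^2 - \|x+x_n\|^2\bigr),$$
a sum of two non-negative terms, whose vanishing forces $\|x_n\|\to\|x\|$ and $\|x+x_n\|\to 2\|x\|$.

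Decomposing $\|\cdot\|=\|\cdot\|_1+\|\cdot\|_2$ and applying the triangle inequality to each summand, the identity
$$\|x\|+\|x_n\|-\|x+x_n\| = \sum_{i=1}^{2}\bigl(\|x\|_i+\|x_n\|_i-\|x+x_n\|_i\bigr)\to 0$$
together with the non-negativity of each term yields $\|x\|_i+\|x_n\|_i-\|x+x_n\|_i\to 0$ for $i=1,2$. The equivalence of $\|\cdot\|_1$ and $\|\cdot\|_2$, combined with $\|x_n\|\to\|x\|$, moreover keeps $\|x_n\|_1$ and $\|x_n\|_2$ inside a fixed compact sub-interval of $(0,\infty)$.

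Next I would apply Fact \ref{fact: normalize} to $\|\cdot\|_2$, arranging (by passing to subsequences and swapping $x\leftrightarrow x_n$ when needed, the equality case being trivial) that the point of smaller $\|\cdot\|_2$-norm sits in the denominator. Since $\|x\|_2+\|x_n\|_2-\|x+x_n\|_2\to 0$ and $\|x_n\|_2$ is bounded away from $0$, the Fact delivers $\bigl\|x/\|x\|_2 + x_n/\|x_n\|_2\bigr\|_2\to 2$. Invoking the LUR property of $\|\cdot\|_2$ at $x/\|x\|_2\in S_{(X,\|\cdot\|_2)}$ then produces $x_n/\|x_n\|_2\to x/\|x\|_2$ in $\|\cdot\|_2$, and by equivalence also in $\|\cdot\|_1$. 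Consequently $\|x_n\|_1/\|x_n\|_2 = \bigl\|x_n/\|x_n\|_2\bigr\|_1 \to \|x\|_1/\|x\|_2$, which combined with $\|x_n\|_1+\|x_n\|_2\to\|x\|_1+\|x\|_2$ forces the individual convergence $\|x_n\|_2\to\|x\|_2$.

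The conclusion is then automatic: $x_n=\|x_n\|_2\,(x_n/\|x_n\|_2)\to\|x\|_2\,(x/\|x\|_2)=x$ in $\|\cdot\|_2$, hence by equivalence also in $\|\cdot\|_1$, and thus in $\|\cdot\|$. I expect the main obstacle to be the Fact \ref{fact: normalize} step: the sum decomposition alone does not give $\|x_n\|_2\to\|x\|_2$ or $\|x+x_n\|_2\to 2\|x\|_2$, so one cannot apply LUR directly; the idea is instead to push the problem to the unit sphere of $\|\cdot\|_2$, run LUR there, and only afterwards recover the individual $\|\cdot\|_2$-norm convergence using the equivalence hypothesis (which is precisely the point where the hypothesis that $\|\cdot\|_1$ and $\|\cdot\|_2$ are equivalent becomes essential).
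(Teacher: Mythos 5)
Your proof is correct and follows essentially the same route as the paper: decompose the LUR defect to obtain $\|x\|_2+\|x_n\|_2-\|x+x_n\|_2\to0$, pass to the $\|\cdot\|_2$-sphere via Fact~\ref{fact: normalize}, and then apply LUR of $\|\cdot\|_2$. The only cosmetic difference is in the final rescaling step: you derive $\|x_n\|_2\to\|x\|_2$ from the simultaneous convergence of the ratio $\|x_n\|_1/\|x_n\|_2$ and the sum $\|x_n\|_1+\|x_n\|_2$, whereas the paper normalizes $x,x_n$ to $S_{(X,\|\cdot\|)}$, argues by contradiction, and invokes Fact~\ref{fact: normalize2} -- and your subsequence/swap remark is exactly the same case split (i)/(ii) that appears in the paper's proof.
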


\begin{proof}
Suppose that  $x,x_n\in S_{(X,\|\cdot\|)}$ ($n\in\N$) are such that  $\|x+x_n\|\to2$. Let us prove that $x_n\to x$ in norm.  First, observe that 
$$\|x\|_1+\|x_n\|_1-\|x+x_n\|_1+\|x\|_2+\|x_n\|_2-\|x+x_n\|_2=2-\|x+x_n\|\to0.$$
In particular, since $\|x\|_i+\|x_n\|_i-\|x+x_n\|_i\geq0$ ($i=1,2$, $n\in\N$), we have that  $\|x\|_2+\|x_n\|_2-\|x+x_n\|_2\to0$. 

Since $\|\cdot\|_2$ is an equivalent norm on $X$, there exists  $M\geq 1$ such that $\|z\|\leq M\|z\|_2$, whenever $z\in X$.
We claim that, for $n\in\N$, we have  
\begin{equation}\label{eq: dis}
        \textstyle 2\geq\left\|\frac{x}{\|x\|_2}+\frac{x_n}{\|x_n\|_2}\right\|_2 \geq  2-M(\|x\|_2+\|x_n\|_2-\|x+x_n\|_2).
    \end{equation}
 To prove our claim, let us suppose without any loss of generality that $\|x_n\|_2\leq\|x\|_2$ (the case in which $\|x_n\|_2\geq\|x\|_2$ is similar).  
 By Fact~\ref{fact: normalize}, we have
$$\textstyle 2\geq\left\|\frac{x}{\|x\|_2}+\frac{x_n}{\|x_n\|_2}\right\|_2\geq 2-\frac{1}{\|x_n\|_2}(\|x\|_2+\|x_n\|_2-\|x+x_n\|_2).$$
Since $\frac{1}{\|x_n\|_2}\leq M\frac{1}{\|x_n\|}=M$, the claim follows.

Now, since  $\|x\|_2+\|x_n\|_2-\|x+x_n\|_2\to0$ and taking into account \eqref{eq: dis}, we have that $\left\|\frac{x}{\|x\|_2}+\frac{x_n}{\|x_n\|_2}\right\|_2\to2$. Since $\|\cdot\|_2$ is LUR, we have $\frac{x_n}{\|x_n\|_2}\to \frac{x}{\|x\|_2}$.
Observing that
$$\textstyle \left\| \frac{x_n}{\|x_n\|_2}- \frac{x}{\|x\|_2} \right\|=\frac{1}{\|x_n\|_2}\left\| x_n- \frac{\|x_n\|_2}{\|x\|_2}x \right\|\geq \left\| x_n- \frac{\|x_n\|_2}{\|x\|_2}x \right\|$$
and that $t_n:=\frac{\|x_n\|_2}{\|x\|_2}\in \left[\frac1M,M\right]$, whenever $n\in\N$,  Fact~\ref{fact: normalize2} implies that $x_n\to x$ in norm. The 
proof is concluded.
\end{proof}

\subsection*{Acknowledgements} We wish to thank the anonymous referee for carefully reading our manuscript and suggesting fruitful improvements.


\begin{thebibliography}{WW}

\bibitem{Bourgin} R. Bourgin,
\emph{Geometric aspects of convex sets with the Radon-Nikodym property}, Lecture Notes in Mathematics, vol.~993, Springer-Verlag, Berlin, 1983.

\bibitem{CH}  C.~Cobollo and P.~H\'ajek, {\em Octahedrality and Gâteaux smoothness}, J. Math. Anal. Appl. \textbf{543} (2025), 128968.

\bibitem{DESOVESTAR}  C.A.~De~Bernardi, J.~Somaglia, and L.~Vesel\'y, {\em Star-finite coverings of Banach spaces}, J. Math. Anal. Appl. \textbf{ 491} (2020), 124384.

\bibitem{DEBESOM-ALUR} C.A.~De~Bernardi and J.~Somaglia, {\em Rotund G\^ateaux smooth  norms which are not locally uniformly rotund}, Proc. Amer. Math. Soc.  \textbf{152}  (2024),  no. 4, 1689--1701.

\bibitem{DGZ} R.~Deville, G.~Godefroy, and V.~Zizler, {\em Smoothness and Renormings in Banach Spaces}, Pitman Monographs, vol. 64, Logman, London, 1993.

\bibitem{D15}  S.~Draga, {\em On weakly locally uniformly rotund norms which are not locally uniformly rotund}, Coll. Math., \textbf{138} (2015), 241--246.

  \bibitem{FHHMZ} M.~Fabian, P.~Habala, P.~H\'ajek, V.~Montesinos, and V.~ Zizler,
		{\em Banach Space Theory. The basis for linear and nonlinear
			analysis}, CMS Books in Mathematics/Ouvrages de Math\'ematiques de
		la SMC, Springer, New York, 2011.

\bibitem{FonfZanco} V.P.~Fonf and C.~Zanco, \emph{Almost overcomplete and almost overtotal sequences in Banach spaces}, J. Math. Anal. Appl. \textbf{420} (2014), 94--101.

\bibitem{FSTZ} V.P.~Fonf, J.~Somaglia, S.~Troyanski, and C.~Zanco, \emph{Almost overcomplete and almost overtotal sequences in Banach spaces II}, J. Math. Anal. Appl. \textbf{434} (2016), 84--92.

  \bibitem{GMZ} A.~J.~Guirao, V.~Montesinos, and V.~ Zizler,
		{\em Renormings in Banach Spaces. A Toolbox.} Birkh\"auser/Springer, Cham, 2022.

   \bibitem{GMZ15} A.~J.~Guirao, V.~Montesinos, and V.~ Zizler,
		{\em A note on extreme points of $C^{\infty}$-smooth balls in polyhedral spaces.} Proc. Amer. Math. Soc. {\bf 143} (2015), 3413--3420.

\bibitem{Ha95} P.~H\'{a}jek, \emph{Smooth norms that depend locally on a finite number of coordinates}, Proc. Amer. Math. Soc. {\bf 123} (1995), 39--44.

  \bibitem{HMVZ} P.~H\'{a}jek, V.~Montesinos, J.~Vanderwerff, and V.~Zizler, \emph{Biorthogonal systems in Banach spaces}, CMS Books in Mathematics/Ouvrages de Math\'{e}matiques de la SMC {\bf 26}. Springer, New York, 2008.

  \bibitem{HQ} P.~H\'{a}jek and  A.~Quilis,
		{\em Counterexamples in rotundity of norms in Banach spaces},
		J. Math. Anal. Appl. \textbf{538} (2024), 128455.


\bibitem{HRST} P.~H\'{a}jek, T.~Russo, J.~Somaglia, and S.~Todor\v {c}evi\'c, {\em An Asplund space with norming Marku\v {s}evi\v {c} basis that is not weakly compactly generated}, Adv. Math. {\bf 392} (2021), 108041.

\bibitem{HallerLangementsPoldvere} R.~Haller, J.~Langemets, and M.~Põldvere, {\em  On Duality of Diameter 2 Properties}, J. Conv. Anal. \textbf{22} (2015), 465–483.


  \bibitem{Kos21} P.~Koszmider,
{\em On the existence of overcomplete sets in some classical nonseparable Banach spaces},
J. Funct. Anal. \textbf{281} (2021), 109172.


 \bibitem{M}  J.P.~Moreno, {\em Geometry of Banach spaces with $(\alpha,\varepsilon)$-property and $(\beta,\varepsilon)$-property}, Rocky Mountain J. Math. \textbf{27}  (1997),  241--256.

  \bibitem{Mo83}  P.~Morris, {\em Disappearance of Extreme Points}, Proc. Amer. Math. Soc. \textbf{88}  (1983),  244--246.

 \bibitem{P}  R.~Partington, {\em Norm attaining operators}, Israel. J. Math. {\bf 43} (1983), 273--276.


  \bibitem{Q}  A.~Quilis,
		{\em Renormings preserving local geometry at countably many points in spheres of Banach spaces and applications},
		J. Math. Anal. Appl. {\bf 526} (2023), 127276.

  \bibitem{RS21}  T.~Russo and J.~Somaglia,
		{\em Overcomplete sets in non-separable Banach spaces},
		Proc. Amer. Math. Soc. \textbf{149}  (2021),  701--714.
  
  \bibitem{RS23}T.~Russo and J.~Somaglia,
		{\em Banach spaces of continuous functions without Markushevich bases},
		Mathematika \textbf{69}  (2023),  992--1010.


		
    \bibitem{YOST} D.~Yost, {\em M-ideals, the strong 2-ball property and some renorming theorems}, Proc. Amer. Math. Soc. \textbf{81} (1981), 299--303.
	\end{thebibliography}
\end{document}